\newcommand{\A}{{\mathbb{A}}}
\newcommand{\C}{{\mathbb{C}}}
\newcommand{\G}{\mathbb{G}}
\newcommand{\Pa}{{\mathbb{P}}}
\newcommand{\Q}{{\mathbb{Q}}}
\newcommand{\oQ}{\overline{\Q}}
\newcommand{\Z}{{\mathbb{Z}}}
\newcommand{\hZ}{\hat{\Z}}
\newcommand{\bo}{\mathbf{1}}
\newcommand{\et}{\mathrm{\acute{e}t}}
\newcommand{\Ind}{\mathrm{Ind}}
\newcommand{\ok}{\overline{k}}
\newcommand{\oP}{\overline{P}}
\newcommand{\red}{\mathrm{red}}
\newcommand{\spec}{\mathrm{spec}\,}
\newcommand{\Aut}{\mathrm{Aut}}
\newcommand{\Coker}{\mathrm{Coker}\,}
\newcommand{\Gal}{\mathrm{Gal}}
\newcommand{\GL}{\mathrm{GL}}
\newcommand{\Ker}{\mathrm{Ker}\,}
\newcommand{\tzeta}{\tilde{\zeta}}
\newcommand{\RRe}{\mathrm{Re}\,}
\newcommand{\Tr}{\mathrm{Tr}}
\newcommand{\Ah}{{\mathcal A}}
\newcommand{\Gh}{{\mathcal G}}
\newcommand{\Hh}{{\mathcal H}}
\newcommand{\Uh}{\mathcal{U}}
\newcommand{\eq}{{\mathfrak{q}}}
\newcommand{\eo}{\mathfrak{o}}
\newcommand{\oeo}{\overline{\eo}}
\newcommand{\ep}{\mathfrak{p}}
\newcommand{\eP}{\mathfrak{P}}
\newcommand{\eX}{{\mathfrak X}}
\newcommand{\oK}{\overline{K}}
\newcommand{\silo}{\xrightarrow{\sim}}
\newcommand{\tei}{\, | \,}
\newenvironment{theorem}{\noindent {\bf Theorem}\it}{}
\newenvironment{examp}{\noindent {\bf Example}}{}
\newenvironment{proof}{\noindent {\bf Proof}}{\mbox{}\hfill$\Box$}
\begin{document}
\title{Horizontal factorizations of certain Hasse--Weil zeta functions --- a remark on a paper by Taniyama\\
{\large \it Dedicated to Francesco Baldassarri}}
\author{Christopher Deninger \and Dimitri Wegner}
\date{\ }
\maketitle
\section{Introduction}
The paper \cite{T} by Taniyama contains several results in the arithmetic theory of $CM$ abelian varieties which have become well known. Moreover the notion of a compatible system $\rho = (\rho_l)$ of $l$-adic representations is introduced there for the first time. Under suitable conditions on $\rho$, Taniyama proves an interesting formula for the alternating product of the $L$-functions of the exterior powers of $\rho$: it is given by an infinite product of Artin $L$-functions each of which modified by a change of finitely many Euler factors. As an application, Taniyama obtains such a formula for the Hasse--Weil zeta function of an abelian scheme over a localized number ring. \\
These infinite product formulas of Taniyama seem to be little known but they motivated the works \cite{JY} and \cite{JR}. In \cite{JY} Joshi and Yogananda show that the Hasse--Weil zeta function $\zeta_{\G_m} (s) = \zeta (s-1) / \zeta (s)$ of $\G_m = \spec \Z [T,T^{-1}]$ is an infinite product of Dirichlet $L$-series. This is a special case of Taniyama's formulas but they give an elementary direct argument. In \cite{JR} Joshi and Raghunathan express quotients of very general  Dirichlet series with Euler products as infinite products of ``twisted Dirichlet series''. The method is purely local. It applies in particular to the Hasse--Weil zeta function of $\eX \times \G_m$ where $\eX$ is a scheme of finite type over $\Z$. In this case the formula has a geometric proof which was suggested by Serre, c.f. \cite{JR} \S\,3. Joshi and Raghunathan also obtain certain infinite product formulas for quotients of automorphic $L$-functions. Recently these have been related to canonical bases by Kim and Lee, \cite{KL} Remark 2.21.

In the present note we interpret Taniyama's identity for the Hasse--Weil zeta function of an abelian scheme $\Ah$ as a ``horizontal factorization'': We write $\Ah$ as an essentially disjoint union of horizontal prime divisors corresponding to certain closed points $P$ on the generic fibre $A = \Ah \otimes \Q$. The residue field of each point $P$ is a number field $K = K (P)$ and $\zeta_{\Ah} (s)$ is essentially the product of the corresponding Dedekind zeta-functions $\zeta_K (s)$. To get an exact formula one needs to change each $\zeta_K (s)$ at finitely many Euler factors. Our method is similar to the one suggested by Serre and used in \cite{JR} \S\,3 and surely our interpretation of Taniyama's formula is known to Serre. We found it before learning about \cite{JR}.\\ 
Section 2 contains a review of Taniyama's formula in the context of $l$-adic representations and a discussion of the examples coming from the $l$-adic cohomologies of $\G_m$ and abelian varieties.\\
In section 3 we prove the ``horizontal factorization'' of the Hasse--Weil zeta functions of a class of group schemes $\Gh$ containing all semi-abelian schemes. The method is geometric and elementary. For $\G_m$ and abelian schemes the factorization formulas are the same as the ones in section 2 which follow from Taniyama's theorem. This clarifies the meaning of Taniyama's formulas in these cases.\\
In section 4 we give some generalizations. In particular there are ``horizontal factorizations'' for the zeta functions of arbitrary open subschemes in $\Pa^N$ over $\spec \Z$.

The first author would like to thank Dinakar Ramakrishnan for having shown him the infinite product formula in Taniyama's work many years ago.  
\section{Taniyama's product formula}
We begin by recalling a result of Taniyama using the terminology of \cite{S} Ch. I which developed from \cite{T}. Let $K / \Q$ be an algebraic number field with ring of integers $\eo_K$ and absolute Galois group $G_K = \Gal (\oK / K)$. We fix algebraic closures and embeddings $\oQ \subset \C$ and $\oQ \subset \oQ_l$ for each prime number $l$. Consider a strictly compatible system of integral $l$-adic representations $\rho = (\rho_l)$ of $G_K$ on free $\Z_l$-modules $E_l$ of rank $d \ge 1$. Let $S$ be the exceptional set of $\rho$. It is the smallest set of places of $K$ such that for $\ep \notin S$ the representation $\rho_l$ is unramified at $\ep$ for each $\ep \nmid l$. For a finite place $\ep$ of $K$ and a place $\eP$ of $\oK$ extending $\ep$ let $I_{\eP}\subset G_{\eP} \subset G_K$ be the inertia and decomposition groups at $\eP$ and denote by $F_{\eP} \in G_{\eP} / I_{\eP}$ the (arithmetic) Frobenius element. If $\ep \notin S , \ep \nmid l$ the automorphism $F_{\eP , \rho_l} = \rho_l (F_{\eP})$ of $E_l$ is well defined. Let $F_{\ep, \rho_l}$ be its conjugacy class in $\Aut \, E_l$. By assumption, 
\[
 P_{\ep, \rho} (T) = \det (1 - F_{\ep , \rho_l} T \tei E_l) \quad \text{and} \quad Q_{\ep,\rho} (T) = \det (F_{\ep , \rho_l} - T \mid E_l)
\]
are polynomials of degree $d$ in $\Z [T]$ which do not depend on $l$. It follows that $\det F_{\ep , \rho_l}$ is an integer independent of $l$. In this situation, Taniyama makes further assumptions:

(I) There is a non-negative integer $w$ independent of $\ep \notin S$ such that all zeroes of $Q_{\ep , \rho} (T)$ in $\C$ have absolute value $N\ep^{w/2}$.

(II) $\det F_{\ep , \rho_l}$ is positive i.e. $P_{\ep , \rho} (T) = (-1)^d a_d T^d + \ldots + 1$ with $a_d > 0$ for any $\ep \notin S$.

(III) For $\ep \notin S$ with $\ep \tei l$ and any place $\eP \tei \ep$ view $E_l$ as a $G_{\eP}$-module. Then there are $l$-adic representations 
\[
 \rho^0_l : G_{\eP} \to \Aut (E^0_l) \quad \text{and} \quad \rho^{\et}_l : G_{\eP} \to \Aut (E^{\et}_l)
\] 
on free $\Z_l$-modules $E^0_l$ and $E^{\et}_l$ with the following properties:

(1) There is an exact sequence of $G_{\eP}$-modules
\[
 0 \longrightarrow E^0_l \longrightarrow E_l \longrightarrow E^{\et}_l \longrightarrow 0 \; .
\]
(2) The representation $\rho^{\et}_l$ of $G_{\eP}$ is unramified.

(3) The eigenvalues of $\rho^{\et}_l (F_{\eP})$ in $\oQ_l$ are exactly those zeroes of $Q_{\ep , \rho} (T)$ under the fixed embedding $\oQ \subset \oQ_l$ which are $l$-adic units.

The representation spaces $E^0_l$ and $E^{\et}_l$ depend on $\ep$. In particular their ranks may vary with $\ep$. Condition (III) needs to be checked for one place $\eP \tei \ep$ only.

Consider the $L$-function of $\rho$
\[
 L (\rho,s) = \prod_{\ep \notin S} P_{\ep,\rho} (N\ep^{-s})^{-1} \; .
\]
By assumption it converges locally uniformly in $\RRe s > 1 + \frac{w}{2}$. More generally, the $L$-function of $\Lambda^i \rho$ converges in $\RRe s > 1 + \frac{iw}{2}$. In the theorem recalled below, Taniyama expresses the alternating product of the $L$-series $L (\Lambda^i \rho,s)$ as an infinite product of modified Artin $L$-series. They are defined as follows: Consider the free $\hZ$-module $E = \prod_l E_l$ of rank $d$. It is a module under the Galois group $G_K$. For a place $\ep \notin S$ of $K$ and a place $\eP \tei \ep$ we also need the $\hZ$-module
\[
 E_{\ep} = \prod_{\ep \nmid l} E_l \times \prod_{\ep \tei l} E^{\et}_l
\]
with its $G_{\eP}$-action via $\rho_l$ and $\rho^{\et}_l$. For any integer $n \ge 1$ the finite $\Z / n = \hZ / n\hZ$-modules $E (n) = E / nE$ and $E_{\ep} (n) = E_{\ep} / n E_{\ep}$ carry $G_K$- resp. $G_{\eP}$-operations and for $\ep \nmid n$ we have $E_{\ep} (n) = E (n)$. By $E_{\ep} (n)^*$ and $E (n)^*$ we denote the subsets of elements of order $n$ in $E_{\ep} (n)$ resp. $E(n)$. They are $G_{\eP}$- resp. $G_K$-invariant. For $\ep \notin S$ and $\nu \ge 1$, Taniyama defines
\[
 \psi_n (\ep^{\nu}) := \Tr (F^{\nu}_{\eP} \tei \C [E_{\ep} (n)^*]) \; .
\]
This is independent of the choice of $\eP \tei \ep$ and it follows from the Chinese remainder theorem that for coprime $n,m$ we have
\begin{equation} \label{eq:1}
 \psi_{nm} (\ep^{\nu}) = \psi_n (\ep^{\nu}) \psi_m (\ep^{\nu}) \quad \text{if} \; \ep \notin S \; .
\end{equation}
We may also consider the Artin representation of $G_K$ on $\C [E (n)^*]$. For its character we have in particular:
\[
 \varphi_n (\ep^{\nu}) = \Tr (F^{\nu}_{\eP} \tei \C [E (n)^*] )\; .
\]
This shows: 
\begin{equation} \label{eq:2}
 \psi_n (\ep^{\nu}) = \varphi_n (\ep^{\nu}) \quad \text{for all $\ep \notin S$ with $\ep \nmid n$.}
\end{equation}
The $L$-function of $\psi_n$ is defined by the formula
\begin{align*}
 L (\psi_n , s) & = \exp \sum_{\ep \notin S} \sum^{\infty}_{\nu=1} \frac{\psi_n (\ep^{\nu})}{\nu} N \ep^{-\nu s} \\
& = \prod_{\ep \notin S} \det (1 - F_{\eP} N \ep^{-s} \tei \C [E_{\ep} (n)^*])^{-1} \; .
\end{align*}
The Artin $L$-function of $\varphi_n$ is given by the formula
\begin{align*}
 L (\varphi_n ,s) & = \exp \sum_{\ep \notin S} \sum^{\infty}_{\nu=1} \frac{\varphi_n (\ep^{\nu})}{\nu} N \ep^{-\nu s} \\
& = \prod_{\ep \notin S} \det (1 - F_{\eP} N \ep^{-s} \tei \C [E (n)^*])^{-1} \; .
\end{align*}
The Euler-factors of $L (\psi_n , s)$ and $L (\varphi_n, s)$ are the same for all $\ep \notin S , \ep \nmid n$. Hence $L (\psi_n , s)$ is an Artin $L$-series up to finitely many Euler factors. Taniyama's result is the following, \cite{T} Theorem 3. For the reader's convenience we sketch a proof.

\begin{theorem} {\bf (Taniyama)} \label{t1}
 Under the previous assumptions the following equality holds in $\RRe s > 1 + \frac{dw}{2}$
\begin{equation} \label{eq:3}
 \prod^d_{i=0} L (\Lambda^i \rho,s)^{(-1)^{d-i}} = \prod^{\infty}_{n=1} L (\psi_n , s) \; .
\end{equation}
\end{theorem}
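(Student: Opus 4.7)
The plan is to match both sides Euler-factor-by-Euler-factor after taking logarithms, reducing the identity to a pointwise identity at each $\ep \notin S$ and each $\nu \ge 1$. For the left-hand side, expanding $\log L(\Lambda^i \rho, s) = \sum_\ep \sum_\nu \tfrac{1}{\nu} \Tr((\Lambda^i F_{\eP,\rho_l})^\nu) N\ep^{-\nu s}$ and assembling with the signs $(-1)^{d-i}$, the universal identity $\sum_{i=0}^d (-1)^i \Tr(\Lambda^i A) = \det(1-A)$ applied to $A = F^\nu_{\eP,\rho_l}$ collapses the alternating sum of traces to $(-1)^d \det(1 - F_{\eP,\rho_l}^\nu \mid E_l)$. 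On the right, the definition of $L(\psi_n,s)$ gives $\log \mathrm{RHS} = \sum_\ep \sum_\nu \tfrac{1}{\nu} N\ep^{-\nu s} \sum_{n \ge 1} \psi_n(\ep^\nu)$ after interchanging the $n$- and $\ep,\nu$-sums. The theorem thus reduces to proving, for each $\ep \notin S$ and each $\nu \ge 1$,
$$
(-1)^d \det(1 - F^\nu_{\eP,\rho_l} \mid E_l) \;=\; \sum_{n=1}^\infty \psi_n(\ep^\nu). \qquad (\star)
$$

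For the right side of $(\star)$, $\psi_n(\ep^\nu)$ is the number of $F^\nu_{\eP}$-fixed points in $E_\ep(n)^*$, and $E_\ep \otimes_{\hZ} \Q/\Z$ decomposes as $\bigsqcup_{n \ge 1} E_\ep(n)^*$ by exact order (with the $n=1$ piece contributing the zero element). Hence $\sum_n \psi_n(\ep^\nu)$ is the total number of torsion fixed points of $F^\nu_{\eP}$. A snake-lemma argument applied to $0 \to E_\ep \to E_\ep \otimes \Q \to E_\ep \otimes \Q/\Z \to 0$ with the endomorphism $F^\nu_{\eP} - 1$, which is invertible on $E_\ep \otimes \Q$ by the weight hypothesis (I), identifies this count with $|E_\ep / (F^\nu_{\eP} - 1) E_\ep|$, and the $\hZ$-product formula expresses it as $\prod_l |\det(F^\nu_{\eP} - 1 \mid E^{(\ep)}_l)|_l^{-1}$, where $E_l^{(\ep)} = E_l$ for $l$ coprime to the residue characteristic $p$ of $\ep$ and $E_p^\et$ for $l = p$.

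The key technical step is to reconcile this local description with the single $l$-independent integer $\det(F^\nu - 1 \mid E_l)$. Condition (III)(3) is the decisive input: if $\alpha_1,\dots,\alpha_d$ are the Frobenius eigenvalues under the embedding $\oQ \hookrightarrow \oQ_p$, with $\alpha_1,\dots,\alpha_r$ those that are $p$-adic units, then for $i > r$ one has $|\alpha_i^\nu|_p < 1 = |{-1}|_p$, hence $|\alpha_i^\nu - 1|_p = 1$. Consequently $v_p\bigl(\det(F^\nu - 1 \mid E_l)\bigr) = v_p\bigl(\det(F^\nu - 1 \mid E_p^\et)\bigr)$, and by the archimedean--nonarchimedean product formula on $\Z$ the local product above collapses to $|\det(F^\nu - 1 \mid E_l)|_\infty$.

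Finally, conditions (I) and (II) determine the sign. Complex-conjugate pairs $\alpha,\bar\alpha$ contribute $|1-\alpha^\nu|^2 > 0$, while among real eigenvalues $\alpha = \pm N\ep^{w/2}$ the positivity of $\det F$ from (II) forces the number of negative ones to be even; a short parity-of-$\nu$ case analysis then shows that $\det(1 - F^\nu \mid E_l)$ has sign $(-1)^d$, so $(-1)^d \det(1 - F^\nu \mid E_l) = |\det(1 - F^\nu \mid E_l)|$, which matches the right side of $(\star)$ by the preceding paragraph. The main obstacle is the reconciliation step using (III)(3) to correctly track the $p$-local behaviour at the exceptional prime and match it to the $l$-independent integer; a subordinate point is to verify absolute convergence of both products in $\RRe s > 1 + \tfrac{dw}{2}$ (using (I) to bound $\sum_n \psi_n(\ep^\nu)$ by a polynomial in $N\ep^{\nu w/2}$), which is what justifies all the rearrangements.
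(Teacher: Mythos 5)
Your proof is correct and follows essentially the same route as the paper: reduce by logarithmic differentiation to a pointwise identity at each $\ep\notin S$ and $\nu\ge 1$, compute $\sum_n\psi_n(\ep^\nu)$ as a count of Frobenius-fixed torsion points via a snake-lemma argument, invoke condition (III)(3) to show the eigenvalues dropped in passing to $E^{\et}_p$ contribute $p$-adic units to $\prod(\lambda_i^\nu-1)$, apply the product formula on $\Z$, and settle the sign with (I) and (II). The only cosmetic difference is that you package the snake lemma adelically over $\hZ$ in one step, whereas the paper first uses multiplicativity of $\psi_n$ (equation (1)) to split into $l$-primary pieces and then runs a local snake lemma at each $l$ on $0\to E_l\xrightarrow{l^k}E_l\to E_\ep(l^k)\to 0$; these are equivalent, with the paper's prime-by-prime version slightly cleaner since it avoids having to make sense of $F^\nu-1$ acting invertibly on the restricted product $E_\ep\otimes_\Z\Q$.
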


\begin{proof}
 Taking logarithmic derivatives on both sides we are reduced to showing that for any $\ep \notin S$ we have an equality where the sum on the right is finite
\[
 \prod^d_{i=1} (\lambda^{\nu}_i - 1) = \sum^{\infty}_{n=1} \psi_n (\ep^{\nu}) \quad \text{for} \; \nu \ge 1 \; .
\]
Here $\lambda_1 , \ldots , \lambda_d$ are the zeroes of $Q_{\ep,\rho} (T)$. Since $\psi_n (\ep^{\nu})$ is multiplicative in $n$ we need to see that
\begin{equation}
 \label{eq:4}
\prod^d_{i=1} (\lambda^{\nu}_i -1) = \prod_l \sum^{\infty}_{k=0} \psi_{l^k} (\ep^{\nu}) \; .
\end{equation}
Here the sums on the right should be finite and equal to $1$ for almost all $l$.
The left hand side is in $\Z$ and an elementary argument using assumptions (I) and (II) shows that it is positive. As for the right hand side, if $\ep \nmid l$ we have a commutative diagram
\[
 \xymatrix{
0 \ar[r] & E_l \ar[r]^{l^k} \ar[d]^{F^{\nu}_{\eP}-1} & E_l \ar[r] \ar[d]^{F^{\nu}_{\eP} -1} & E_{\ep} (l^k) \ar[r] \ar[d]^{F^{\nu}_{\eP} -1} & 0 \\
0 \ar[r] & E_l \ar[r]^{l^k} & E_l \ar[r] & E_{\ep} (l^k) \ar[r] & 0 \; .
}
\]
By assumption (I) the middle and left vertical maps are injective with finite cokernels of $l$-power order. The snake lemma gives an isomorphism
\[
\Ker (F^{\nu}_{\eP}-1 \mid E_{\ep} (l^k)) \xrightarrow{\sim} \Coker (F^{\nu}_{\eP}-1 \mid E_l)_{l^k}  \; .
\]
By definition, $\psi_{l^k} (\ep^{\nu})$ is the number of elements of order $l^k$ in the group on the left. Hence $\psi_{l^k} (\ep^{\nu}) = 0$ for large $k$ and we have:
\begin{align*}
 \sum^{\infty}_{k=0} \psi_{l^k} (\ep^{\nu}) & = |\Coker (F^{\nu}_{\eP} -1 \mid E_l)|\\
& = |\det (F^{\nu}_{\eP} -1 \mid E_l)|^{-1}_l\\
& = |\prod^d_{i=1} (\lambda^{\nu}_i -1) |^{-1}_l \; .
\end{align*}
For prime numbers $l$ with $\ep \mid l$ we argue similarly replacing $E_l$ by $E^{\et}_l$. This gives
\[
 \sum^{\infty}_{k=0} \psi_{l^k} (\ep^{\nu}) = |\det (F^{\nu}_{\eP} -1 \mid E^{\et}_l) |^{-1}_l \; .
\]
 By assumption (III\,3) the eigenvalues $\lambda_i$ which do not occur among the eigenvalues of $F_{\eP}$ on $E^{\et}_l$ are those with positive $l$-valuation. For them $\lambda^{\nu}_i -1$ is an $l$-adic unit and therefore we have
\[
 \sum^{\infty}_{k=0} \psi_{l^k} (\ep^{\nu}) = |\prod^d_{i=1} (\lambda^{\nu}_i -1) |^{-1}_l
\]
in the case $\ep \mid l$ as well. Again the sum is finite. Equation \eqref{eq:4} now follows from the product formula.
\end{proof}

In order to understand Taniyama's formula better, note that the Artin $L$-series $L (\varphi_n, s)$ is a product of finitely many Dedekind zeta-functions: Decompose the $G_K$-set $E (n)^*$ into orbits
\[
 E(n)^* = \coprod_P G_K P
\]
where $P$ runs over a set of representatives in $E (n)^*$ of the $G_K$-orbits. Let $K_P$ be the fixed field of the stabilizer of $P$ in $G_K$ i.e. $G_{K_P} = (G_K)_P$. Then we have
\begin{align*}
 \C [E (n)^*] & = \bigoplus_P \C [G_K P] = \bigoplus_P \C [G_K / G_{K_P}] \\
& = \bigoplus_P \Ind^{G_K}_{G_{K_P}} (\bo) \; .
\end{align*}
The formalism of Artin $L$-series \cite{N} Ch. 7, \S\,10 now implies the formula
\begin{equation}
 \label{eq:5}
L (\varphi_n, s) = \prod_P \zeta_{K_P,S} (s) \; .
\end{equation}
Here $\zeta_{K_P,S} (s)$ is the Dedekind zeta function of $K_P$ without the Euler factors for primes above $S$.

We now discuss the cyclotomic character and following Taniyama also the representation of $G_K$ on the Tate-module of an abelian variety.

The group $G_K$ acts on $T_l \G_m = \varprojlim_{k} \mu_{l^k} (\oK)$. The corresponding representations $\rho_l : G_K \to \GL_1 (T_l \G_m) = \Z^{\times}_l$ form a strictly compatible system $\rho = (\rho_l)$ with empty exceptional set $S$. We have $P_{\ep, \rho} (T) = 1 - N\ep T$. Hence conditions (I) and (II) are satisfied with $d = 1 , w = 2$. Condition (III) holds with $E^{\et}_l = 0$ and $E^0_l = T_l \G_m$. The $L$-function of $\rho$ is $L (\rho , s) = \zeta_K (s-1)$. We have $E_{\ep} = \prod_{\ep \nmid l} T_l \G_m$ and hence $E_{\ep} (n) = \mu_{n_{\ep}} (\oK)$ where $n_{\ep}$ is the biggest prime to $\ep$ factor of $n$. It follows that $E_{\ep} (n)^* = \emptyset$ hence $\psi_n (\ep^{\nu}) = 0$ if $\ep \mid n$ and that $E_{\ep} (n)^*$ is the set of primitive $n$-th roots of unity in $\oK$ if $\ep \nmid n$. 

For $\ep \nmid n$ choosing a primitive $n$-th root of unity $\zeta_n$, the map $\nu \mapsto \zeta^{\nu}_n$ gives a bijection $(\Z / n)^{\times} \silo E_{\ep} (n)^*$. Multiplication by $N \ep$ on the left corresponds to the operation by $F_{\eP}$ on the right. It follows that
\begin{align*}
 \psi_n (\ep^{\nu}) & = \Tr (F^{\nu}_{\eP} \mid  \C [E_{\ep} (n)^*]) \\
& = \Tr (N \ep^{\nu} \mid \C [(\Z / n)^{\times}])\\
& = \sum_{\chi \in (\widehat{\Z / n)^{\times}}} \chi (N \ep^{\nu}) \; .
\end{align*}
Here we have used the decomposition of the regular representation of $(\Z / n)^{\times}$ into the direct sum of the characters $\chi$ of $(\Z / n)^{\times}$. Setting
\begin{equation}
 \label{eq:6}
L_K (\chi,s) = \prod_{\ep \nmid n} (1 - \chi (N \ep) N \ep^{-s})^{-1}
\end{equation}
we therefore find the formula
\begin{equation}
 \label{eq:7}
L (\psi_n, s) = \prod_{\chi \in \widehat{(\Z / n)^{\times}}} L_K (\chi,s) \; .
\end{equation}
Taniyama's theorem therefore implies the following relation:
\begin{equation}
 \label{eq:8}
\zeta_K (s-1) / \zeta_K (s) = \prod^{\infty}_{n=1} \prod_{\chi \in \widehat{(\Z / n)^{\times}}} L_K (\chi,s) \quad \text{in} \; \RRe s > 2 \; .
\end{equation}
For $K = \Q$, a direct proof of this formula is given in \cite{JY}. We can make formula \eqref{eq:5} more explicit in the example. The set $E(n)$ is the set of $n$-th roots of unity. The $G_K$-orbits on $E (n)$ are in bijection with the set $|\mu_{n,K}|$ of closed points $P$ of the finite group scheme $\mu_{n,K} = \spec K [T] / (T^n-1)$ and $K_P$ is the residue field of $P$, a finite extension of $K$. Thus the $G_K$-orbits on $E (n)^*$ are in bijection with
\[
 |\mu^*_{n,K}| := |\mu_{n,K}| \setminus \bigcup_{d \mid n \atop d \neq n} |\mu_{d,K}| \; .
\]
According to \eqref{eq:5}, we have
\begin{equation}
 \label{eq:9}
L (\varphi_n , s) = \prod_{P \in |\mu^*_{n,K}| }\zeta_{K_P} (s) \; .
\end{equation}
The $L$-functions $L (\varphi_n,s)$ and $L (\psi_n,s)$ have the same Euler factors for $\ep \nmid n$. For $\ep \mid n$ the Euler factor of $L (\psi_n,s)$ equals $1$. Accordingly we modify $\zeta_{K_P} (s)$ for $P \in |\mu^*_{n,K}|$ by removing the Euler factors for primes dividing $n$. Let $\tzeta_{K_P} (s)$ be the resulting zeta function
\[
 \tzeta_{K_P} (s) = \prod_{\eq \nmid n} (1 - N\eq^{-s})^{-1} 
\]
where $\eq$ runs over the prime ideals of the ring of integers in $K_P$. Then \eqref{eq:9} implies the formula
\begin{equation}
 \label{eq:10}
L (\psi_n,s) = \prod_{P \in |\mu^*_{n,K}|} \tzeta_{K_P} (s) \; .
\end{equation}
Using \eqref{eq:8} and the relation $\zeta_{\G_{m,\eo_K}} (s) = \zeta_K (s-1) / \zeta_K (s)$ we get
\begin{equation}
 \label{eq:11}
\zeta_{\G_{m,\eo_K}} (s) = \prod_P \tzeta_{K_P} (s) \; .
\end{equation}
Here $P$ runs over the closed points of $\G_{m,K}$ which are torsion i.e. correspond to Galois orbits of torsion points in $\G_{m,K} (\oK)$. 

The second example, due to Taniyama in \cite{T} section 18 will be discussed more briefly. Let $A / K$ be an abelian variety of dimension $g\ge 1$ and let $S$ be the set of places where $A$ has bad reduction. Let $\Ah$ be the extension of $A$ to an abelian scheme over $\spec \eo_K \setminus S$. Then $S$ is the exceptional set of the strictly compatible system $\rho = (\rho_l)$ where $\rho_l$ is the representation of $G_K$ on the Tate module $T_l A = \varprojlim_k A_{l^k} (\oK)$. Conditions (I) and (II) are known to be true with $d = 2g$ and $w = 1$ by the work of Weil \cite{We}.

As for condition (III) let $K_{\ep}$ be the completion of $K$ at the place $\ep$. Let $\oK_{\ep}$ be the algebraic closure of $K_{\ep}$ in the completion of $\oK$ with respect to a given place $\eP$ of $\oK$ above $\ep$. We write $\eo_{\ep}$ and $\oeo_{\ep}$ for the respective rings of integers. The residue field $k_{\ep} = \eo_K / \ep = \eo_{\ep} / \ep \eo_{\ep}$ has $N \ep$ elements and $\ok_{\ep} = \oeo_{\ep} / \eP \oeo_{\ep}$ is an algebraic closure of $k_{\ep}$. Now assume that $\ep \notin S$ and $\ep \mid l$. The connected-\'etale exact sequence of the $l$-divisible group $\Ah (l)$ of $\Ah$ over $\eo_{\ep}$
\[
 0 \longrightarrow \Ah (l)^0 \longrightarrow \Ah (l) \longrightarrow \Ah (l)^{\et} \longrightarrow 0
\]
 gives an exact sequence of Tate-modules with $G_{\eP} = \Gal (\oK_{\ep} / K_{\ep})$-action
\[
 0 \longrightarrow T (\Ah (l)^0) \longrightarrow T_l A \longrightarrow T_l (\Ah \otimes k_{\ep}) \longrightarrow 0 \; .
\]
Setting $E^0_l = T (\Ah (l)^0)$ and
\[
 E^{\et}_l = T_l (\Ah \otimes k_{\ep}) = \varprojlim_{\nu} \Ah (\ok_{\ep})_{l^{\nu}}
\]
the conditions in (III) are verified. 

For a scheme $\eX$ of finite type over $\spec \Z$ there is the Hasse--Weil zeta function of $\eX$
\[
 \zeta_{\eX} (s) = \prod_{x \in |\eX|} (1 - Nx^{-s})^{-1} \quad \text{for} \; \RRe s > \dim \eX \; .
\]
Here $|\eX|$ is the set of closed points of $\eX$ and for $x \in \eX$ the number of elements in the finite residue field $\kappa (x)$ is denoted by $Nx$. Using the basic relation
\begin{equation}
 \label{eq:12}
\zeta_{\Ah} (s) = \prod^{2g}_{i=0} L (\Lambda^i \rho , s)^{(-1)^i} \; ,
\end{equation}
Taniyama's theorem gives the formula
\begin{equation} \label{eq:13}
 \zeta_{\Ah} (s) = \prod^{\infty}_{n=1} L (\psi_n , s) \; .
\end{equation}
We do not work out $L (\psi_n, s)$ completely but only $L (\varphi_n , s)$ which has the same Euler factors for $\ep \nmid n$. We have $E (n) = A_n (\oK)$ and $E (n)^*$ is the subset of $A_n (\oK)$ of elements of order $n$. The $G_K$-orbits on $A_n (\oK)$ are in bijection with the closed points $P$ of $A_n$ and $K_P$ is the residue field of $P$. Set
\[
 |A^*_n| = |A_n| \setminus \bigcup_{d \mid n \atop d \neq n} |A_d| \; .
\]
Then according to \eqref{eq:5} we have the formula
\[
 L (\varphi_n ,s) = \prod_{P \in |A^*_n|} \zeta_{K_P,S} (s) \; .
\]
For $P \in |A^*_n|$ let $\tzeta_{K_P,S} (s)$ be modifications of $\zeta_{K_P,S} (s)$ at the Euler factors of primes $\ep \mid n$ such that we have
\[
 L (\psi_n,s) = \prod_{P \in |A^*_n|} \tzeta_{K_P,S} (s) \; .
\]
Then Taniyama's formula becomes
\begin{equation} \label{eq:14}
 \zeta_{\Ah} (s) = \prod_P \tzeta_{K_P,S} (s) \; .
\end{equation}
Here $P$ runs over the closed torsion points of $A$.

\section{Horizontal factorizations for group schemes}
\label{sec:3}
In this section we give a simple geometric proof of a generalization to certain commutative group schemes $\Gh$ of equations \eqref{eq:11} and \eqref{eq:14} which concerned $\G_m$ resp. $\Ah$. The set of closed points of $\Gh$ will be essentially partitioned into the set of closed points on the horizontal prime divisors in $\Gh$ obtained by taking the closures in $\Gh$ of torsion points $P$ on the generic fibre of $\Gh$. 

For a finite set $S$ of maximal ideals in $\eo_K$ set $U = \spec \eo_K \setminus S$. Thus $U = \spec \eo_{K,S}$ where
\[
 \eo_{K,S} = \Big\{ \frac{f}{g} \mid f,g \in \eo_K , \ep \nmid g \; \text{for} \; \ep \notin S \Big\} \; .
\]
We consider commutative smooth separated group schemes $\Gh$ of finite type over $U$ for which the $n$-multiplication is finite and flat for all $n \ge 1$. Thus $\G_m$ and $\Ah$ above qualify but $\G_a$ does not. It follows that $\Gh_n$ is a finite flat group scheme over $U$. For $\ep \in U$ the reduction $\Gh_{n,k_{\ep}} = \Gh_n \otimes k_{\ep}$ is an \'etale group scheme over $k_{\ep}$ if $\ep \nmid n$. For $\ep \mid n$ we have the connected-\'etale sequence
\[
 0 \longrightarrow \Gh^0_{n,k_{\ep}} \longrightarrow \Gh_{n,k_{\ep}} \longrightarrow \Gh^{\et}_{n,k_{\ep}} \longrightarrow 0 \; .
\]
It splits since $\Gh^{\red}_{n,k_{\ep}}$ is a closed subgroup scheme of $\Gh_{n,k_{\ep}}$ because $k_{\ep}$ is perfect and the composition
\[
 \Gh^{\red}_{n,k_{\ep}} \hookrightarrow \Gh_{n,k_{\ep}} \longrightarrow \Gh^{\et}_{n,k_{\ep}}
\]
is an isomorphism, c.f.~\cite{Wa} 6.8. The unit section of $\Gh$ is a regular immersion by \cite{L} 6 Proposition 3.13. Hence $\Gh_n \hookrightarrow \Gh$ is a regular immersion as well, the $n$-multiplication being flat. It follows that $\Gh_n$ is purely one-dimensional. See \cite{L} 6 Proposition 3.11 for both assertions. The irreducible components of $\Gh_n$ are therefore the closures in $\Gh$ of the closed points $P$ of $\Gh_{n,K} = \Gh_n \otimes K$. Let $C_P = \{ \oP \}$ be the irreducible component of $\Gh_n$ corresponding to $P$ with its induced reduced structure. Over $U_n = U \setminus S_n$  with $S_n = \{ \ep \mid n \}$ the scheme $\Gh_n$ is finite and \'etale. Hence the regular purely one-dimensional scheme $\Gh_n \times_U U_n$ is the disjoint union of the restrictions of the $C_P$ to $U_n$
\begin{equation} \label{eq:15}
 \Gh_n \times_U U_n = \coprod_P C_P \times_U U_n \; .
\end{equation}
Let $K_P$ be the residue field of $P$, an algebraic number field. Since $C_P \to U$ is finite and $U = \spec \eo_{K,S}$ is affine, $C_P = \spec A_P$ is affine as well. The ring $A_P$ is an order in $\eo_{K_P,S}$ and since $C_P \times_U U_n$ and hence $A_{P,S_n}$ is regular we have $\eo_{K_P , S \cup S_n} = A_{P,S_n}$. We therefore find
\begin{equation} \label{eq:16}
 C_P \times_U U_n = \spec \eo_{K_P,S \cup S_n} \; .
\end{equation}
Noting that $C_P \times_U U_n$ is even \'etale over $U_n$ it follows that the extension of number fields $K_P / K$ is unramified at all maximal ideals $\ep \notin S \cup S_n$.

The closed points of $\Gh$ are contained in the closed fibres $\Gh_{k_{\ep}} = \Gh \otimes k_{\ep}$ for maximal ideals $\ep \notin S$. A closed point $x$ of $\Gh_{k_{\ep}}$ corresponds to a finite  $\Gal (\ok_{\ep} / k_{\ep})$-orbit in $\Gh_{k_{\ep}} (\ok_{\ep}) = \Gh (\ok_{\ep})$. Since $k_{\ep}$ is finite, the group $\Gh (\ok_{\ep})$ is a union of finite abelian groups and hence every element has finite order. It follows that we have an equality
\[
 |\Gh| = \bigcup^{\infty}_{n=1} |\Gh_n| \; .
\]
For any group scheme $\Hh$ over a scheme we denote by $\Hh^*_n$ the open subscheme of $\Hh_n$ of points of order $n$:
\[
 \Hh^*_n = \Hh_n \setminus \bigcup_{d \mid n \atop d \neq n} \Hh_d \; .
\]
With this notation we get the disjoint decomposition:
\[
 |\Gh| = \coprod^{\infty}_{n=1} |\Gh^*_n| \; .
\]
This implies the formula
\begin{equation} \label{eq:17}
 \zeta_{\Gh} (s) = \prod^{\infty}_{n=1} \zeta_{\Gh^*_n} (s) \; .
\end{equation}
By equations \eqref{eq:15} and \eqref{eq:16} we have
\begin{equation}\label{eq:18}
 \Gh^*_n \times_U U_n = \coprod_Q \spec \eo_{K_Q,S\cup S_n}
\end{equation}
where $Q$ runs over the closed points of order $n$ in $\Gh_K$.

Hence
\begin{equation} \label{eq:19}
 \zeta_{\Gh^*_n \times_U U_n} (s) = \prod_Q \zeta_{K_Q,S \cup S_n} (s) \; .
\end{equation}
Changing the finitely many Euler factors at primes dividing $n$ suitably one gets modifications $\tzeta_{K_Q ,S}$ of $\zeta_{K_Q,S}$ for which the formula
\[
 \zeta_{\Gh^*_n} (s) = \prod_Q \tzeta_{K_Q,S} (s)
\]
holds. Then \eqref{eq:17} becomes an equation
\begin{equation} \label{eq:20}
 \zeta_{\Gh} (s) = \prod_P \tzeta_{K_P,S} (s)
\end{equation}
where $P$ runs over the closed torsion points of $\Gh_K$. This gives a simple geometric explanation for equation \eqref{eq:14}. To make these modifications more explicit we now discuss the zeta-function of $\Gh^*_n$ in more detail. In the decomposition
\begin{equation} \label{eq:21}
 \zeta_{\Gh^*_n} (s) = \prod_{\ep \notin S} \zeta_{\Gh^*_n \otimes k_{\ep}} (s) 
\end{equation}
the factors at primes not dividing $n$ are understood by formula \eqref{eq:19}. Let now $\ep \notin S$ be a prime with $\ep \mid n$. 
We have $\Gh^*_n \otimes k_{\ep} = (\Gh_{n,k_{\ep}})^*$. Since $k_{\ep}$ is perfect, $\Gh^{\red}_{n,k_{\ep}}$ is a closed subgroup scheme of $\Gh_{n,k_{\ep}}$ and we have
\[
 (\Gh^*_n \otimes k_{\ep} )^{\red} = \Gh^{\red *}_{n,k_{\ep}} \quad \text{and} \quad \Gh^{\red}_{n,k_{\ep}} \cong \Gh^{\et}_{n,k_{\ep}} \; .
\]
For the $\ep$-factor in equation \eqref{eq:21} we therefore find
\begin{equation} \label{eq:22}
 \zeta_{\Gh^*_n \otimes k_{\ep}} (s) = \zeta_{\Gh^{\red *}_{n,k_{\ep}} } (s) = \zeta_{\Gh^{\et *}_{n,k_{\ep}}} (s) \; . 
\end{equation}
Consider the connected-\'etale sequence for the group scheme $\Gh_{n,\eo_{\ep}} = \Gh_n \otimes_U \eo_{\ep}$
\[
 0 \longrightarrow \Gh^0_{n,\eo_{\ep}} \longrightarrow \Gh_{n,\eo_{\ep}} \longrightarrow \Gh^{\et}_{n,\eo_{\ep}} \longrightarrow 0 \; .
\]
The closed points of $\Gh^{\et}_{n,k_{\ep}}$ are in bijection with the closed points of $\Gh^{\et}_{n,K_{\ep}} = \Gh^{\et}_{n,\eo_{\ep}} \otimes_{\eo_{\ep}} K_{\ep}$ because the Galois orbits on $\Gh^{\et}_{n,\eo_{\ep}} (\oK_{\ep}) = \Gh^{\et}_{n,\eo_{\ep}} (\ok_{\ep}) = \Gh^{\et}_{n,k_{\ep}} (\ok_{\ep})$ are the same. Hence the closed points of $\Gh^{\et *}_{n,k_{\ep}}$ are in bijection with the closed points of $\Gh^{\et}_{n,K_{\ep}}$ of order $n$. Therefore the finite product of Euler factors defining $\zeta_{\Gh^*_n \otimes k_{\ep}} (s)$ can also be described in terms of the generic fibre $\Gh^{\et}_{n,K_{\ep}}$. For $\Gh = \Ah$ an abelian scheme over $U$ this is the geometry behind Taniyama's definition of the Euler factors of $L (\psi_n,s)$ over primes $\ep$ dividing $n$. 

\begin{examp}
 Consider $\Gh = \G_{m,\eo_K}$ over $U = \spec \eo_K$ and thus $\Gh_n = \mu_{n,\eo_K}$. The fibres of $\Gh^*_n$ over primes $\ep \mid n$ are empty because in this case the order of $\mu_n (\ok_{\ep})$ is less than $n$ and so there are no points of order $n$. It follows that $\Gh^*_n = \Gh^*_n \times U_n$. By formula \eqref{eq:19} we have
\begin{equation} \label{eq:23}
 \zeta_{\Gh^*_n \times U_n} (s) = \prod_Q \zeta_{K_Q , S_n} (s)
\end{equation}
where $Q$ runs over the closed points of order $n$ in $\G_{m,K}$. The zeta function $\zeta_{K_Q,S_n} (s)$ is the same as the modified Dedekind zeta function $\tzeta_{K_Q} (s)$ obtained by removing all Euler factors in $\zeta_{K_Q} (s)$ over primes dividing $n$. It follows that we have
\begin{equation} \label{eq:24}
 \zeta_{\Gh^*_n} (s) = \prod_Q \tzeta_{K_Q} (s)
\end{equation}
and therefore by \eqref{eq:17} with $P$ running over the closed torsion points of $\G_{m,K}$
\begin{equation} \label{eq:25}
 \zeta_{\G_{m,\eo_K}} (s) = \prod_P \tzeta_{K_P} (s) \; .
\end{equation}
Thus we obtain equation \eqref{eq:11} again. One can make $\Gh^*_n$ explicit as follows. Consider first the case $K = \Q$. The primitive roots of unity of order $n$ in $\oQ^{\times}$ are all conjugated by $G_{\Q}$. Hence $\Gh_{\Q} = \G_{m,\Q}$ has exactly one closed point $Q$ of order $n$. It corresponds to the maximal ideal generated by the cyclotomic polynomial $\Phi_n (T)$ in $\Q [T,T^{-1}]$. Its residue field is the cyclotomic field $\Q (\zeta_n)$. Formula \eqref{eq:18} therefore gives:
\begin{equation} \label{eq:26}
 \mu^*_{n,\Z} = \mu^*_{n,\Z} \times U_n = \spec \Z [1/n , \zeta_n ] = \spec \Z [ 1/n ] [T] / (\Phi_n (T)) \; .
\end{equation}
 The latter description for $\mu^*_{n,\Z}$ is the one used in \cite{JR} Example 3.1. Equation \eqref{eq:26} also follows from the fact that $\Phi_d$ and $\Phi_e$ are coprime in $\Z [1/n] [T]$ if $d \neq e$ and $d \mid n$ and $e \mid n$, use e.g. \cite{A}. In any case, one has
\[
 \zeta_{\mu^*_{n,\Z}} (s) = \tzeta_{\Q (\zeta_n)} (s)
\]
and therefore
\[
 \zeta_{\G_{m,\Z}} (s) = \prod^{\infty}_{n=1} \tzeta_{\Q (\zeta_n)} (s) \; .
\]
In the general case base change gives
\[
 \mu^*_{n,\eo_K} = \spec \Z [ 1/n , \zeta_n ] \otimes_{\Z} \eo_K = \spec \eo_K [ 1/n ] [T] / (\Phi_n (T)) \; .
\]
Thus the fibre over $K$ decomposes into finitelly many points $Q$ corresponding to the irreducible divisors of $\Phi_n$ over $K$. We can form their modified Dedekind zeta functions $\tzeta_{K_Q} (s)$ and from them obtain $\zeta_{ \G_{m,\eo_K}} (s)$ by formula \eqref{eq:25}.
\end{examp}

\section{Generalizations}
For a scheme $\eX$ which is separated of finite type over $U = \spec \eo_K \setminus S$ and a group scheme $\Gh / U$ as before we have
\[
 |\eX \times_U \Gh| = \coprod^{\infty}_{n=1} |\eX \times_U \Gh^*_n | \; .
\]
Hence
\[
 \zeta_{\eX \times_U \Gh} (s) = \prod^{\infty}_{n=1} \zeta_{\eX \times_U \Gh^*_n} (s) \; .
\]
Using the decomposition \eqref{eq:18} we find
\[
 \eX \times_U \Gh^*_n \times_U U_n = \coprod_Q \eX \otimes_{\eo_{K,S}} \eo_{K_Q,S \cup S_n} \; .
\]
Hence the zeta-function of $\eX \times_U \Gh^*_n$ is up to the Euler factors for $\ep \mid n$ equal to the product of the zeta functions of $\eX \otimes_{\eo_{K,S}} \eo_{K_Q,S \cup S_n}$.

For $\Gh = \G_m$ over $U = \spec \Z$ we have
\[
 \eX \times \mu^*_{n,\Z} = \eX \otimes \Z [1/n , \zeta_n]
\]
and hence
\[
 \zeta_{\eX \times \G_m} (s) = \prod^{\infty}_{n=1} \zeta_{\eX \otimes \Z [1/n , \zeta_n]} (s) \; .
\]
In \cite{JR} section 3 the zeta function of $\eX \otimes \Z [1/n, \zeta_n]$ is further decomposed into a product of zeta-functions of $\eX$ twisted by Dirichlet characters.

\subsection*{More general schemes}
Consider an open subscheme $\Uh \subset \Pa^N = \Pa^N_{\Z}$. We will describe a way to get a horizontal factorization of the zeta function of $\Uh$. Setting $\eX = \Pa^N \setminus \Uh$, the formula 
\[
\zeta_{\eX} (s) \zeta_{\Uh} (s) = \zeta_{\Pa^N} (s) =  \zeta (s) \zeta (s-1) \cdots \zeta (s-N)
\]
then gives information on the zeta function of the projective scheme $\eX$. Using the standard decomposition
\[
 \Pa^N = \A^N \amalg \ldots \amalg \A^0
\]
we have
\begin{equation} \label{eq:27}
 \zeta_{\Uh} (s) = \prod^N_{M=0} \zeta_{\Uh \cap \A^M} (s) \; . 
\end{equation}
For $I = \{ 0 , 1 \}^M$ and $i \in I$ set $A_i = A_{i_1} \times \ldots \times A_{i_M}$ where $A_0 = 0_{\A^M} = \spec \Z$ and $A_1 = \G_m$. Then we have
\[
 \A^M = \coprod_{i\in I} A_i
\]
and hence
\begin{equation} \label{eq:28}
 \zeta_{\Uh \cap \A^M} (s) = \prod_{i \in I} \zeta_{\Uh \cap A_i} (s) \; .
\end{equation}
Using the canonical isomorphism $A_i = \G^{|i|}_m$ where $|i| = i_1 + \ldots + i_M$ we will identify $\Uh \cap A_i$ with an open subscheme of $\G^{|i|}_m$. Thus we may assume that $\Uh$ itself is an open subscheme of $\Gh = \G^N_m$ for some $N$. From the decomposition
\[
 |\Gh| = \coprod^{\infty}_{n=1} |\Gh^*_n|
\]
we obtain
\[
  |\Uh| = \coprod^{\infty}_{n=1} |\Uh \cap \Gh^*_n|
\]
and hence
\[
 \zeta_{\Uh} (s) = \prod^{\infty}_{n=1} \zeta_{\Uh \cap \Gh^*_n} (s) \; .
\]
Next, note that
\[
 \Gh^*_n = \coprod_{j\in J_n} \mu^*_{j_1} \times \ldots \times \mu^*_{j_N} \; ,
\]
where $J_n$ is the set of $N$-tuples $j = (j_1 , \ldots , j_N)$ of divisors of $n$ whose smallest common multiple is $n$. Thus
\begin{equation} \label{eq:29}
 \zeta_{\Uh \cap \Gh^*_n} (s) = \prod_{j\in J_n} \zeta_{\Uh \cap (\mu^*_{j_1} \times \ldots \times \mu^*_{j_N})} (s) \; .
\end{equation}
We have seen that $\mu^*_{\nu} = \spec \Z [1/\nu , \zeta_{\nu}]$ where $\zeta_{\nu}$ is a primitive $\nu$-th root of unity. This gives
\[
 \Uh \cap (\mu^*_{j_1} \times \ldots \times \mu^*_{j_N}) = \Uh \otimes_{\Z} R_j
\]
where
\[
 R_j = \Z [\zeta_{j_1}] \otimes_{\Z} \ldots \otimes_{\Z} \Z [\zeta_{j_N}] \otimes_{\Z} \Z [1/n]
\]
is an \'etale $\Z [1/n]$-algebra. We have
\[
 \spec R_j = \coprod_{\alpha} \spec \eo_{K_{\alpha}} [1/n]
\]
where $\alpha$ runs over the maximal ideals of $\Q (\zeta_{j_1}) \otimes \ldots \otimes \Q (\zeta_{j_N})$. They correspond to the orbits of the diagonal $G_{\Q}$-action on $\mu^*_{j_1} (\oQ) \times \ldots \times \mu^*_{j_N} (\oQ)$. It follows that
\begin{equation} \label{eq:30}
 \zeta_{\Uh \cap (\mu^*_{j_1} \times \ldots \times \mu^*_{j_N})} (s) = \prod_{\alpha} \zeta_{\Uh_{\alpha}} (s)
\end{equation}
where $\Uh_{\alpha} = \Uh \cap \spec \eo_{K_{\alpha}} [1/n]$ is an open subscheme of $\spec \eo_{K_{\alpha}} [1/n]$. Hence, up to finitely many Euler factors, $\zeta_{\Uh_{\alpha}} (s)$ is the Dedekind zeta-function $\zeta_{K_{\alpha} , S_n} (s)$. Thus, combining formulas \eqref{eq:27}--\eqref{eq:30}, we obtain the desired horizontal factorization of $\zeta_{\Uh} (s)$.

\end{document}